\newtheorem{thm}{Theorem}[section]
\newtheorem{prop}[thm]{Proposition}
\numberwithin{figure}{section}
\newcommand{\refT}[1]{Theorem~\ref{#1}}
\newcommand{\refP}[1]{Proposition~\ref{#1}}
\newcommand{\refand}[2]{\ref{#1} and~\ref{#2}}
\newcommand\cC{\mathcal C}
\newcommand\cI{\mathcal I}
\newcommand\cS{{\mathcal S}}
\newcommand{\I}[1]{{\mathbf 1}_{[#1]}}
\newcommand{\p}[1]{{\mathbf P}\left(#1\right)}
\newcommand{\E}[1]{{\mathbf E}\left[#1\right]}
\newcommand{\N}{\mathbb{N}}
\newcommand{\R}{\mathbb{R}}
\newcommand{\Z}{\mathbb{Z}}
\newcommand{\Ber}[1]{\mathrm{Bernoulli}(#1)}
\newcommand{\Poi}[1]{\mathrm{Poi}(#1)}
\newcommand{\eps}{\varepsilon}
\newcommand{\e}{\varepsilon}
\newcommand{\g}{\gamma}
\newcommand{\Ln}[1]{\ln\left(#1\right)}
\newcommand{\eqdist}{\stackrel{\mathcal{L}}{=}}
\newcommand{\dTV}{\mathrm{d}_{\mathrm{TV}}}
\newcommand{\RRT}{\textrm{RRT}~}
\newcommand{\RRTs}{\textrm{RRT}s~}
\title{Targeted cutting of random recursive trees}
\author[1]{Laura Eslava}
\author[2]{Sergio I. L\'opez}
\author[2]{Marco L. Ortiz}
\affil[1]{Instituto de Investigaciones en Matem\'aticas Aplicadas y en Sistemas, Universidad Nacional Aut\'onoma de México.}
\affil[2]{Facultad de Ciencias, Universidad Nacional Aut\'noma de M\'exico.}
\date{}
\begin{document}

\maketitle

\begin{abstract} 
We propose a method for cutting down a random recursive tree that focuses on its higher degree vertices. Enumerate the vertices of a random recursive tree of size $n$ according to a decreasing order of their degrees; namely, let $(v^{(i)})_{i=1}^{n}$ be so that $\deg(v^{(1)}) \geq \cdots \geq \deg (v^{(n)})$. The targeted, vertex-cutting process is performed by sequentially removing vertices $v^{(1)}$, $v^{(2)}, \ldots, v^{(n)}$ and keeping only the subtree containing the root after each removal. The algorithm ends when the root is picked to be removed. The total number of steps for this procedure, $X_n^{targ}$, is upper bounded by $Z_{\geq D}$, which denotes the number of vertices that have degree at least as large as the degree of the root. We obtain that the first order growth of $X_n^{targ}$ is upper bounded by $n^{1-\ln 2}$, which is substantially smaller than the required number of removals if, instead, the vertices where selected uniformly at random. More precisely, we prove that $\ln(Z_{\geq D})$ grows as $\ln (n)$ asymptotically and obtain its limiting behavior in probability. Moreover, we obtain that the $k$-th moment of $\ln(Z_{\geq D})$ is proportional to $(\ln(n))^k$.
\end{abstract}

\section{Introduction}

\textit{Random recursive trees} (abbreviated as \RRTs) are rooted trees, where each vertex has a unique label, obtained by the following procedure: Let $T_1$ be a single vertex labeled $1$. For $n>1$ the tree $T_n$ is obtained from the tree $T_{n-1}$ by adding a edge directed from a new vertex labeled $n$ to a vertex with label in $\{1,...,n-1\}$, chosen uniformly at random and independent for each $n$. We say that $T_n$ has size $n$ and that the degree of a vertex $v$ is the number of edges directed towards $v$. 

The idea of cutting random recursive trees was introduced by Meir and Moon \cite{Meir1974}. They studied the following procedure: Start with a random recursive tree on $n$ vertices. Choose an edge at random and remove it, along with the cut subtree that does not contain the root. Repeat until the remaining tree consists only of the root; at which point, we say that the tree has been \textit{deleted}. Let $X_n$ be the number of edge removals needed to delete a \RRT with $n$ vertices. 

By a recursive approach using that the remaining tree after one deletion has itself the distribution of a \RRT of smaller, random size, Meir and Moon proved that the expectation of $X_n$ grows asymptotically
as $\frac{n}{\ln(n)}$. 
Panholzer \cite{Panholzer2004} proposed an extension of this procedure: to study the total cost of the algorithm until deletion; which is the sum of the costs at every step where cutting an edge of a tree of size $n$ has a cost of $n^c$ for some non negative constant $c$ (the original cutting corresponds to $c=0$). By the use of generating functions and recursion, they obtained the asymptotic behavior for the $k$-moment of the total cost and proved, as a corollary, that $\frac{\ln(n)}{n}X_n$ converges to one, in probability. Iksanov and M\"{o}hle \cite{Iksanov2007} obtained the expression of $X_n$ up to its random order; namely, that 
\begin{align}\label{eq:Yn}
Y_n:=\frac{(\ln(n))^2}{n}X_n-\ln(n)-\ln(\ln(n))    
\end{align}
converges weakly to a random variable $Y$ with characteristic function given by
$$\varphi_Y(\lambda):=\exp  \Big\{ i\lambda\ln\vert\lambda\vert-  \frac{\pi\vert\lambda\vert}{2} \Big\}.$$
Their proof is based on the construction of a coupling of $X_n$ with the first passage time of certain random walk; while Drmota et al. \cite{Drmota2009} give a proof of this theorem using recursive methods.

In this work we consider random recursive trees and propose a cutting procedure that corresponds to a targeted cutting (focused on high-degree vertices). We first present our model together with the main result, we then overview the proof strategy and discuss some possible interpretations of this procedure and several related models of tree-deletion.

To define the targeted cutting, let $T_n$ be a \RRT of size $n$ and enumerate its vertices as $(v^{(i)})_{i=1}^{n}$ according to a decreasing order of their degrees; that is, $\deg(v^{(1)})\geq \cdots \geq \deg (v^{(n)})$, breaking ties uniformly at random. The targeted vertex-cutting process is performed by sequentially removing vertices $v^{(1)}$, $v^{(2)}, \ldots, v^{(n)}$ and keeping only the subtree containing the root after each removal (skip a step if the chosen vertex had been previously removed). The procedure ends when the root is picked to be removed. Let $X^{targ}_n$ denote the number of vertex deletions before we select the root to be removed.

Our main theorem is an stochastic upper bound for the deletion time. In doing so, we analyse the number of vertices with degree as large as that of the root; see Section~\ref{sec:degree}.

\begin{thm}\label{thm:attack}
The random number of cuts $X_n^{targ}$ in the targeted cutting of $T_n$ satisfies, for any $\eps>0$, $X_n^{targ}=O_p(n^{\gamma+\eps})$ where $\gamma:=1-\ln 2$. Namely, for each $\delta>0$, there is $M=M(\eps,\delta)>0$ and $N=N(\eps, \delta) \in \N$ such that for all $n\ge N$,
\begin{align}\label{eq:Kn_Op}
    \p{X_n^{targ}> M n^{\gamma+\eps}}<\delta.
\end{align}
\end{thm}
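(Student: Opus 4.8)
The plan is to deduce the theorem from a first–moment estimate on the random variable $Z_{\ge D}$, the number of vertices of $T_n$ whose degree is at least $D:=\deg_n(1)$, the degree of the root. In the targeted process the root is picked only after every vertex of strictly larger degree has been picked, so $X_n^{targ}\le Z_{\ge D}$ deterministically (see \refS{sec:degree}); it therefore suffices to prove that $Z_{\ge D}=O_p(n^{\gamma+\eps})$.

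First I would bound $D$ from below. Writing $D=\sum_{i=2}^n \I{i\to 1}$ as a sum of independent $\Ber{1/(i-1)}$ random variables, we have $\E{D}=H_{n-1}\sim\ln n$, and a Chernoff bound for the lower tail yields, for each small $\eps_1>0$, a constant $c(\eps_1)>0$ with $\p{D<(1-\eps_1)\ln n}\le n^{-c(\eps_1)}\to 0$. Set $k:=\lfloor(1-\eps_1)\ln n\rfloor$. On the event $\{D\ge k\}$ we have $Z_{\ge D}\le Z_{\ge k}$, so Markov's inequality gives
\begin{align*}
\p{X_n^{targ}>Mn^{\gamma+\eps}}\le\p{D<k}+\frac{\E{Z_{\ge k}}}{Mn^{\gamma+\eps}}.
\end{align*}
The first term vanishes as $n\to\infty$, so the whole problem is reduced to showing $\E{Z_{\ge k}}=O\bigl(n^{\gamma+\eps/2}\ln n\bigr)$ for a suitable choice $\eps_1=\eps_1(\eps)$.

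For the key estimate, the degree of vertex $j$ in $T_n$ is $\deg_n(j)=\sum_{i=j+1}^n \I{i\to j}$, again a sum of independent $\Ber{1/(i-1)}$ variables, with mean $\mu_j=H_{n-1}-H_{j-1}=\ln(n/j)+o(1)$ uniformly over $j\ge n^{\eps_1}$. For such $j$ with $\mu_j\le k$ the exponential Chernoff bound gives $\p{\deg_n(j)\ge k}\le(e\mu_j/k)^k e^{-\mu_j}$; substituting $k=(1-\eps_1)\ln n$ and writing $\mu_j=s_j\ln n$, this is exactly $n^{f(s_j)}$ with
\begin{align*}
f(s):=(1-\eps_1)\bigl(1+\ln s-\ln(1-\eps_1)\bigr)-s.
\end{align*}
The at most $n^{\eps_1}$ vertices with $\mu_j>k$ contribute at most $n^{\eps_1}$ to $\E{Z_{\ge k}}$, and the vertices with $j$ close to $n$ contribute $o(1)$; bounding the remaining sum $\sum_j n^{f(s_j)}$ by the integral $\int n^{f(1-\ln x/\ln n)}\,dx$ and substituting $x=n^{1-s}$ gives $\E{Z_{\ge k}}\le n^{\eps_1}+C\ln n\cdot n^{\,\max_{s\in[0,1]}(f(s)+1-s)}$. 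A one–variable optimization shows that $f(s)+1-s$ is maximized at $s=(1-\eps_1)/2$, with value $(1-\eps_1)\gamma+\eps_1$; the maximizing scale is $j\asymp\sqrt n$, and letting $\eps_1\to 0$ the exponent tends to $\gamma$, which is where the constant $\gamma=1-\ln 2$ (the value of $2+\ln s-2s$ at $s=\tfrac12$) enters. Choosing $\eps_1:=\eps/2$ then gives $\E{Z_{\ge k}}=O(n^{\gamma+\eps/2}\ln n)$, and feeding this back into the displayed bound makes $\p{X_n^{targ}>Mn^{\gamma+\eps}}$ smaller than any prescribed $\delta$ once $n$ is large (even with $M=1$).

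The main obstacle is the estimate of the third paragraph. One must have a Chernoff bound for $\p{\deg_n(j)\ge k}$ that is simultaneously sharp and uniform in $j$, and then the passage from the sum over $j$ to the governing integral must be carried out carefully — isolating the ranges $j<n^{\eps_1}$ and $j$ near $n$, and keeping track of the $o(1)$ error in $\mu_j=\ln(n/j)+o(1)$ — so that the Laplace-type analysis correctly identifies the maximizing scale $j\asymp\sqrt n$ and the resulting exponent $1-\ln 2$. The lower bound on $D$ and the deterministic inequality $X_n^{targ}\le Z_{\ge D}$ are routine by comparison.
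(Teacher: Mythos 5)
Your proof is correct, and it follows the same high-level reduction as the paper: bound $X_n^{targ}\le Z_{\ge D}$ deterministically, use a lower-tail bound on $D$ to replace the random threshold $D$ by the deterministic $k=\lfloor(1-\eps_1)\ln n\rfloor$, and finish with Markov's inequality applied to $\E{Z_{\ge k}}$. Where you diverge is in how the first-moment bound $\E{Z_{\ge k}}=O(n^{\gamma+\eps_1\ln 2}\ln n)$ is obtained. The paper treats \refT{thm:attack} as an immediate corollary of \refT{teo:probability}, whose upper-bound half rests on a cited moment estimate (Proposition~\ref{prop:moments}, a simplified version of results in \cite{Addario-Eslava2017}) that gives the sharp bound $\E{Z_{\ge d}}\le 2^{\log n-d}$; plugging in $d=k$ reproduces your exponent $\gamma+\eps_1\ln 2$ without the extra $\ln n$ factor. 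You instead rederive this from first principles by computing $\p{\deg_n(j)\ge k}$ via a Poisson-type Chernoff bound for the inhomogeneous Bernoulli sum $\deg_n(j)=\sum_{i>j}\I{i\to j}$ and then performing a Laplace-type optimization over the scale $j\asymp n^{1-s}$; the critical scale $s^*=(1-\eps_1)/2$ (i.e.\ $j\asymp\sqrt n$) and the resulting exponent $1-\ln 2$ come out correctly, and the lossy $\ln n$ factor is harmless for this theorem. Your route is more elementary and self-contained; the paper's route is shorter because it imports the needed moment bound, and it also proves the matching lower bound on $Z_{\ge D}$ (via Paley--Zygmund), which is not needed for \refT{thm:attack} but is essential for \refT{teo:probability} and \refT{teo:bounds}. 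One small imprecision in your write-up: ties in degree are broken uniformly at random, so it is not literally true that the root is only picked after every vertex of strictly larger degree; but since the root is somewhere among the $Z_{\ge D}$ vertices of degree at least $D$, the bound $X_n^{targ}\le Z_{\ge D}$ you rely on is still valid.
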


Theorem \ref{thm:attack} gives a precise answer to an expected outcome: the targeted cutting deletion time is significantly smaller compared to the deletion time for uniformly random removals.

In the next section we provide precise statements on a random upper bound for $X_n^{targ}$ from which Theorem~\ref{thm:attack} follows. 

\subsection{A random tail of the degree sequence}\label{sec:degree}

For $d\in \N$, we let $Z_d$ and $Z_{\ge d}$ be the number of vertices in $T_n$ that have degree $d$ and degree at least $d$, respectively (we omit the dependence in $n$ for these variables throughout). The asymptotic joint distribution of $(Z_d)_{d\ge 0}$ is described by the limiting distribution, as $n\to \infty$, of certain urn models in  \cite{Janson05}, while the limiting joint distribution of $(Z_{\ge d + \lfloor \log_2 n\rfloor})_{d\in \Z}$ is described (along suitable subsequences) by an explicit Poisson point process in $\Z\cup \{\infty\}$ in  \cite{Addario-Eslava2017}; the lattice shift by $\log_2 n$ steams from the fact that $\Delta_n/\log_2 n$, the renormalized maximum degree in $T_n$, converges a.s. to 1 \cite{DevroyeLu95}. In contrast, the degree of the root in $T_n$ is asymptotically normal with mean $\ln n$ (note that $\log_2 n\approx 1.4 \ln n$).

In this paper we are interested in the random variable $Z_{\geq D}$, which corresponds to the number of vertices that have degree at least the degree of the root of $T_n$, henceforth denoted by $D=D(n)$. The techniques developed in \cite{Eslava2020} provide bounds on the total variation distance between $Z_{\ge c\ln n}$ and a Poisson random variable with mean $n^\alpha$ with a suitable $\alpha=\alpha(c)$ for  $c\in (1, \log_2 n)$. However, there is far less control over the random variables $Z_{\ge c\ln n}$ for $c\le 1$ \cite{Addario-Eslava2017}.
The following two theorems provide precise statements to back up the informal approximation $Z_{\ge D}\sim n^{1-\ln 2}$. 

\begin{thm}
\label{teo:probability}Let $\gamma:=1-\ln(2)$, the following convergence in probability holds
\[  \dfrac{\ln(Z_{\geq D} )}{\ln(n)}  \stackrel{p}{\longrightarrow} \gamma. \]
\end{thm}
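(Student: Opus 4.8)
The plan is to analyze $Z_{\ge D}$ by conditioning on the degree $D$ of the root and using the fact that $D/\ln n \to 1$ in probability (asymptotic normality of the root degree, with mean and variance $\sim \ln n$). So with high probability $D = c_n \ln n$ for some $c_n \to 1$, and it suffices to understand the typical size of $Z_{\ge c\ln n}$ for $c$ near $1$. The key input is a first-moment/concentration estimate: one should show that $\E{Z_{\ge c\ln n}}$ behaves like $n^{\psi(c) + o(1)}$ for an explicit exponent $\psi(c)$, and that $\ln(\psi(c))$ is continuous at $c=1$ with $\psi(1) = 1 - \ln 2 = \gamma$. The exponent $\psi(c)$ comes from the standard estimate that the expected number of vertices of degree $\ge d$ in $T_n$ is roughly $n \cdot \p{\mathrm{Bin} \sim d}$-type sums; more precisely, a vertex inserted at time $i$ has degree approximately $\mathrm{Poi}(\ln(n/i))$, so $\E{Z_{\ge d}} \approx \sum_{i=1}^n \p{\Poi{\ln(n/i)} \ge d}$, and a Laplace/saddle-point evaluation of this sum with $d = c\ln n$ yields $n^{\psi(c)+o(1)}$ where $\psi(c) = \sup_{0 \le t \le 1}\{ (1-t) - c\ln\frac{1-t}{?}\}$; evaluating at $c=1$ gives $\gamma$. (The relevant computation is essentially the one behind the $\Delta_n/\log_2 n \to 1$ result of \cite{DevroyeLu95} and the estimates in \cite{Addario-Eslava2017,Eslava2020}.)

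The steps, in order, would be: (1) Record that $D = D(n)$, the root degree, satisfies $D/\ln n \xrightarrow{p} 1$; hence for any $\eta > 0$, with probability $\to 1$ we have $(1-\eta)\ln n \le D \le (1+\eta)\ln n$. (2) Establish the deterministic two-sided bound: for fixed $c$, $\ln \E{Z_{\ge c \ln n}} = (\psi(c) + o(1))\ln n$, via the Poisson approximation for individual vertex degrees and a Laplace-method evaluation of the resulting sum. (3) Upgrade the upper tail: by Markov's inequality, $\p{Z_{\ge c\ln n} \ge n^{\psi(c)+\eps}} \le n^{-\eps+o(1)} \to 0$, which gives $\ln(Z_{\ge c\ln n})/\ln n \le \psi(c) + \eps$ whp. (4) For the matching lower bound, a second-moment argument on $Z_{\ge c\ln n}$ (controlling the covariance between the degree indicators of two distinct vertices, which are nearly independent) shows $Z_{\ge c\ln n} \ge n^{\psi(c)-\eps}$ whp, so $\ln(Z_{\ge c\ln n})/\ln n \ge \psi(c)-\eps$ whp. (5) Combine: since $Z_{\ge d}$ is monotone nonincreasing in $d$, on the event $(1-\eta)\ln n \le D \le (1+\eta)\ln n$ we sandwich $Z_{\ge (1+\eta)\ln n} \le Z_{\ge D} \le Z_{\ge (1-\eta)\ln n}$; take $n \to \infty$ then $\eta \to 0$, using continuity of $\psi$ at $1$ with $\psi(1)=\gamma$, to conclude $\ln(Z_{\ge D})/\ln n \xrightarrow{p} \gamma$.

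The main obstacle I anticipate is step (4), the lower bound via the second moment: the degrees of two fixed vertices in a \RRT are not independent, and for $d$ of order $\ln n$ one needs the correlation to be weak enough that $\E{Z_{\ge d}^2} = (1+o(1))\E{Z_{\ge d}}^2$ on the relevant logarithmic scale. Controlling $\p{\deg(u) \ge d, \deg(v) \ge d}$ for a uniformly random pair — equivalently handling the "collision" events where one of $u,v$ is an ancestor of the other, and the shared-insertion-time effects — requires a careful conditional analysis (e.g., via the coupling of \RRT degrees with independent Poisson-type variables, or the urn/Pólya representation). A secondary technical point is making the Laplace-method estimate in step (2) uniform enough in $c$ over a shrinking neighborhood of $1$ so that the $\eta \to 0$ limit in step (5) is legitimate; this should follow from uniform continuity of $\psi$ and explicit error bounds in the saddle-point evaluation, but it must be stated carefully. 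Everything else (steps 1, 3, 5) is fairly routine given the cited results.
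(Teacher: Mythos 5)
Your plan is essentially the paper's proof: concentrate $D$ around $\ln n$, use monotonicity of $d\mapsto Z_{\ge d}$ to sandwich $Z_{\ge D}$ between $Z_{\ge(1+\eta)\ln n}$ and $Z_{\ge(1-\eta)\ln n}$ on the event $D\in(1\pm\eta)\ln n$, then apply Markov's inequality for the upper tail and a second-moment (Paley--Zygmund) argument for the lower tail. The one thing the paper does differently is that it does not re-derive the moment estimates you sketch via the Poisson/Laplace heuristic (and which you correctly flag as the main obstacle, including the covariance control needed for $\E{Z_{\ge d}^2}=(1+o(1))\E{Z_{\ge d}}^2$); it simply cites Proposition 2.1 of Addario-Berry--Eslava, which gives the explicit closed form $\E{(Z_{\ge d})_k}=(2^{\log n-d})^k(1+o(n^{-\alpha}))$ for $d<\tfrac32\ln n$, so that $\psi(c)=1-c\ln 2$ exactly, the uniformity in $c$ near $1$ is automatic, and the second-moment bound is immediate.
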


\begin{thm}
\label{teo:bounds} 
Let $\gamma:=1-\ln(2)$. For any positive integer $k$, 
\begin{align*}
\E{\big(\Ln{Z_{\geq D}}\big)^k} = \big(\g\Ln{n}\big)^k \left(1+o(1)\right).
\end{align*}
\end{thm}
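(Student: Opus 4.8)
The plan is to derive Theorem~\ref{teo:bounds} from Theorem~\ref{teo:probability} by a uniform-integrability argument, the point being that $\ln(Z_{\ge D})/\ln n$ is bounded \emph{deterministically}. First I would record the elementary two-sided bound $1\le Z_{\ge D}\le n$, valid for every realization of $T_n$ with $n\ge 2$: the root is itself counted by $Z_{\ge D}$ since its degree equals $D$, which gives the lower bound, and the upper bound is just the total number of vertices. Writing $W_n:=\ln(Z_{\ge D})/\ln n$, this yields $0\le W_n\le 1$, and hence $W_n^k\in[0,1]$, almost surely.

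Next, Theorem~\ref{teo:probability} gives $W_n\stackrel{p}{\longrightarrow}\g$, and since $x\mapsto x^k$ is continuous the continuous mapping theorem yields $W_n^k\stackrel{p}{\longrightarrow}\g^k$. A sequence of $[0,1]$-valued random variables is trivially uniformly integrable (it is dominated by the constant $1$), so convergence in probability upgrades to convergence in $L^1$, i.e.\ $\E{W_n^k}\to\g^k$; equivalently one may invoke the bounded convergence theorem along an arbitrary subsequence. Multiplying through by $(\ln n)^k$ and using $\E{\big(\Ln{Z_{\ge D}}\big)^k}=(\ln n)^k\,\E{W_n^k}$, we obtain $\E{\big(\Ln{Z_{\ge D}}\big)^k}=(\ln n)^k\big(\g^k+o(1)\big)=\big(\g\Ln{n}\big)^k(1+o(1))$, which is the claim.

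As for the main obstacle: there essentially is none beyond Theorem~\ref{teo:probability} itself — the deterministic boundedness of $W_n$ does all the work, and no tail estimates on $Z_{\ge D}$ are needed at this stage. The only mild points to check are the bounds $1\le Z_{\ge D}\le n$ and the restriction $n\ge 2$ so that $\ln n>0$ makes $W_n$ well defined. If one wanted a proof not relying on Theorem~\ref{teo:probability}, one would instead need two-sided concentration of the form $\p{Z_{\ge D}\le n^{\g-\eps}}\to 0$ and $\p{Z_{\ge D}\ge n^{\g+\eps}}\to 0$ with enough decay, together with the a priori bound $W_n\le 1$ to control the contribution of the (vanishing-probability) bad events to the expectation; but these concentration estimates are precisely what underlies Theorem~\ref{teo:probability}, so quoting that theorem is the economical route.
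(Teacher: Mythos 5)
Your proof is correct and takes a genuinely different, and substantially shorter, route than the paper's. The crux is your observation that $Z_{\ge D}$ is \emph{deterministically} sandwiched in $[1,n]$ (the root itself has degree $D$, so it is always counted; there are at most $n$ vertices), so $W_n:=\ln(Z_{\ge D})/\ln n\in[0,1]$ a.s. for $n\ge 2$. Theorem~\ref{teo:probability} gives $W_n\stackrel{p}{\to}\gamma$, the continuous mapping theorem gives $W_n^k\stackrel{p}{\to}\gamma^k$, and since $W_n^k$ is uniformly bounded, bounded convergence (along subsequences, or equivalently via uniform integrability) yields $\E{W_n^k}\to\gamma^k>0$, whence $\E{(\ln Z_{\ge D})^k}=(\ln n)^k\E{W_n^k}=(\gamma\ln n)^k(1+o(1))$. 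There is no circularity: in the paper Theorem~\ref{teo:probability} is established in Section~\ref{sec proof theo} from Proposition~\ref{prop:moments} and the Bernstein bound \eqref{eq: concentration}, independently of Theorem~\ref{teo:bounds}.

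The paper instead builds a coupling between $T_n$ and a conditioned tree $T_n^{(\eps)}$ (Section~\ref{sec:coupling}), passes through the ratio variable $W_d$, a Jensen bound and a Poisson total-variation estimate (Propositions~\ref{prop:B}--\ref{prop:expectation coupling}), and finally a binomial expansion with Cauchy--Schwarz. The authors remark that ``the tails of $D$ do not vanish as fast as a naive bounding of the moments of $Z_{\ge D}$ would require,'' but that obstacle concerns moments of the unbounded variable $Z_{\ge D}$ itself; it simply does not arise once one renormalizes to $\ln(Z_{\ge D})/\ln n$, which your argument exploits. What the paper's heavier machinery buys, beyond the $o(1)$ asserted in the theorem, is an explicit polynomial error rate for fixed $\eps$ in the intermediate bound \eqref{eq:bounds2}, and a coupling construction that controls the conditioned degree sequence and may be of independent use; but for the theorem exactly as stated, your bounded-convergence argument is both correct and considerably more economical.
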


Theorem~\ref{thm:attack} follows immediately from Theorem~\ref{teo:probability}. 

\begin{proof}[Proof of Theorem~\ref{thm:attack} assuming Theorem~\ref{teo:probability}]
Since the removal of vertices is done accordingly to their degree, $Z_{\geq D}$ gives us the worst-case destruction time for the targeted cutting procedure; that is, $X^{targ}_n\le Z_{\geq D}$ a.s.. 
It then follows that \eqref{eq:Kn_Op} is satisfied, for $\eps,\delta>0$, by letting $M(\eps,\delta)=1$ and $N$ large enough that $\p{Z_{\ge D}\ge n^{\gamma +\eps} }<\delta$, for $n\ge N$. \end{proof}

The proof of Theorem~\ref{teo:probability} is based on the concentration of $D$ and the first and second moment method, see Section~\ref{sec proof theo}. 
Unfortunately, the tails of $D$ do not vanish as fast as a naive 
bounding of the moments of $Z_{\ge D}$ would require 
by the current control we have on the distribution of $Z_{\ge d}$ for $d\sim \ln n$ (see \eqref{eq: concentration} and Propositions \refand{prop:moments}{prop:Poi}, respectively).
Instead, to establish \refT{teo:bounds}, we resort to a coupling between a random recursive tree $T_n$ of size $n$ and a random recursive tree $T_n^{(\eps)}$ of size $n$ conditioned on $D$ to take values in $( (1- \e) \ln (n), (1 +\e)\ln(n) )$, for any given $\eps\in (0,1)$, see Section~\ref{sec:coupling}. 

Briefly described, the coupling is the following. Let $n\in \N$, for the construction of both $T_n$ and $T_n^{(\eps)}$ it suffices to define the parent of vertex $i$, for each $1< i\le n$. For each tree, we break down this choice in two steps: First sample a random Bernoulli to decide whether $i$ attaches to the root or not. Then the parent of $i$ is either the root or a uniformly sampled vertex among the rest of the possible parents. The Bernoulli variables are coupled in such a way that $T_n$ is constructed by independent variables while the Bernoulli random variables related to the tree $T_n^{(\eps)}$ imply the conditioning on $D$ taking values in the aforementioned interval. On the other hand, if a given vertex chooses a parent distinct from the root, then this choice is \textit{exactly the same} for both the unconditioned and conditioned tree.

\subsection{Discussion on related models}\label{sec:Discussion}

Cutting processes of random trees and graphs can serve as a proxy to the resilience of a particular network to breakdowns, either from intentional attacks or fortuitous events. What is considered a breakdown and resilience may differ depending on the context. In the first cutting procedure, introduced by Meir and Moon for random recursive trees \cite{Meir1974}: it considers contamination from a certain source within an organism and one may think of the number of cuts as the necessary steps to isolate the source of the contamination. 

Janson \cite{Janson2006}, noted that the number of cuts needed to destroy a tree is equivalent to the number of records that arises from a random labeling of the edges, this approach is further used by Holmgren \cite{Holmgren2008,Holmgren2010} to study split trees and binary search trees, respectively. 

Several modifications of the uniform edge deletion process have been proposed by different authors.  Javanian and Vahini-Asl \cite{Javanian2004} modified the process with the objective of isolating the last vertex added to the network. Kuba and Panholzer \cite{Kuba2008a, Kuba2008, Kuba2013} published a series of articles focusing specifically on isolating a distinguished vertex; for example, the isolation of a leaf or isolating multiple distinguished vertices. 

In the context of Galton-Watson trees, cutting down trees has been predominantly studied from the perspective of vertex-cutting. That is, vertices are selected to be removed, rather than edges, and once a vertex has been removed, we keep the subtree containing the root; the procedure is repeated until the root is removed. Note that selecting an edge uniformly at random is equivalent to uniformly selecting a vertex other than the root. Bertoin and Miermont \cite{Bertoin2013} constructed a method to compare vertex-cutting versus edge cutting and studied the tree destruction process. Addario-Berry et al. \cite{ABerry2014} studied the case of Galton-Watson trees with critical, finite-variance offspring distribution, conditioned to have a fixed number of total progeny. Dieuleveut \cite{Dieuleveut2015} modified the vertex-cutting process so that the probability of a given vertex to be cut, at each step, is proportional to its degree. This process was generalized by Cai et al. \cite{Cai2019} by considering that every vertex can stand $k$ attacks before being removed. This generalization was later studied by Berzunza et al. \cite{Berzunza2021} for deterministic trees and by Berzunza et al. \cite{Berzunza2020} for conditioned Galton-Watson trees.

Similar cutting procedures have been studied in more general graphs. To name a few, Berche et al. \cite{Berche2009} studied public transport networks; Xu et al. \cite{Xu2019} studied a broad range of dynamical processes including birth-death processes, regulatory dynamics and epidemic processes on scale free graphs and Erdös-Renyi networks under two different targeted attacks, with both high-degree and low-degree vertices; Alenazi and Sterbenz \cite{Alenazi2015} compared different graph metrics, such as node betweenness, node centrality and node degree, to measure network resilience to random and directed attacks.

As we mentioned before, what is considered a breakdown and resilience may differ depending on the context. In the internet, for example, the failures of connectivity that happen over time can be modeled by random cuts, while malicious attacks, from a hacker or enemy trying to disconnect some given network of servers, can be mathematically described by targeted cutting towards highly connected servers. These ideas on resilience were posed by Cohen et al. \cite{Cohen2000,Cohen2001} for scale free graphs. Later, Bollob\'as and Riordan \cite{Bollobas2004} compared random versus malicious attacks on scale free graphs. Since malicious attacks may hold an strategy to better take advantage of some characteristics of the network, it is expected that the number of cuts required would be significantly fewer than in a completely random attack. 

For scale-free networks Cohen et al. \cite{Cohen2001} obtained the next result. Assume the degree of a uniformly random vertex of the graph follows a power law with decay $\alpha$ on the support $m,\dots ,n$. If a proportion $p$ of the vertices with the highest degree is deleted, then the probability $\bar{p}$ of a randomly chosen node to connect to a deleted vertex is roughly approximated by $p^{\frac{2-\alpha}{1- \alpha}}$, for $\alpha >2$. In the case $\alpha=2$ the approximation of $\bar{p}$ is given by $\ln \Big( \frac{np}{m} \Big)$. This result supports the intuition of needing a small proportion of vertices to be removed in a targeted attack to delete a network.

\subsection{Notation}

We use $|A|$ to denote the cardinality of a set $A$. For $n<m\in \N$ we write $[n]:=\{1,2,\dots, n\}$ and $[m,n]=\{m,m+1,\ldots, n\}$. For $a,b, c,x\in \R$ with $b,c>0$, we use $x\in(a\pm b)c$ as an abbreviation for $(a-b)c\leq x\leq (a+b)c$. In what follows we denote natural and  base $2$ logarithms by $\ln (\cdot)$ and $\log (\cdot)$, respectively. We often use the identity $\ln (a) =\ln (2)\log (a)$ for $a>0$.

For $r\in\R$ and $a\in\N$ define $(r)_a:=r(r-1)\cdots (r-a+1)$ and $(r)_0=1$. For real functions $f,g$ we write $f(x)=o(g(x))$ when $\lim_{x\to \infty}  f(x)/g(x) =0$ and $f(x)=O(g(x))$ when $|f(x)/g(x)|\le C$ for some $C>0$. The convergence in probability will be written as $\stackrel{p}{\longrightarrow}$. A rooted tree is a tree with a distinguished vertex, which we call the root. We always consider the edges to be directed towards the root. A directed edge $e=uv$ is directed from $u$ to $v$ and, in this case, we will say that $v$ is the parent of $u$. Given a rooted tree $T$ and one of its vertices $v$, the degree of $v$ in $T$, denoted by $d_T(v)$, is the number of edges directed towards $v$. We say that $T$ has size $n$ if it has $n$ vertices, and $[n]$ will denote the set of vertices of a tree of size $n$.

\section{Deterministic tails of the degree sequence}\label{sec proof theo}

Given a random recursive tree $T_n$, let $Z_{\geq d}$ denote the number of vertices with degree at least $d$, that is
\begin{align}\label{dfn:Z}
Z_{\geq d} \equiv Z_{\geq d}(n):=\big\vert \{v\in [n]: d_{T_n}(v)\geq d \}\big\vert. 
\end{align}
Our theorems build upon results on the convergence of the variables $Z_{\ge d}$ since, they are non-increasing on $d$ and $Z_{\ge D}$ is a tail of the degree sequence with random index; recall that $D=D(n)$ denotes the degree of the root of $T_n$. The following two propositions are simplified versions of Proposition 2.1 in \cite{Addario-Eslava2017} and Theorem $1.8$ in \cite{Eslava2020}, respectively.

\begin{prop}[Moments of $Z_{\ge d}$]
\label{prop:moments}
For any $d\in \N$, $\E{Z_{\ge d}}\le 2^{\log (n)-d}$. Moreover, there exists $\alpha>0$ such that if $d < \frac{3}{2}\ln (n)$ and $k\in \{1,2\}$ then 
\begin{align}
    \E{(Z_{\ge d})_k} &= (2^{\log (n)-d})^k(1+o(n^{-\alpha})), \label{eq:firstmoment}\\
    \E{Z_{\geq d}^2} &= \E{ Z_{\geq d}}^2 \big( 1+ o(n^{-\alpha}) \big).\label{eq:PZ}    
\end{align}
\end{prop}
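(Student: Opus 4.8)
\noindent\emph{Proof strategy.} The plan is to work directly with the standard construction of $T_n$. Label the vertices $1,\dots,n$ with $1$ the root, and for $2\le j\le n$ let $B_{i,j}$ (with $i<j$) be the indicator that the parent of $j$ is $i$; for each fixed $j$ these indicators form a uniformly random coordinate vector on $[j-1]$, independently over $j$. Then the degree of vertex $i$ is $D_i=\sum_{j=i+1}^{n}B_{i,j}$, so that the summands defining a single $D_i$ are independent with $B_{i,j}\sim\Ber{1/(j-1)}$, whence $D_i\eqdist\sum_{m=i}^{n-1}\Ber{1/m}$ (when $i=1$ the $m=1$ summand equals $1$ deterministically). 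Since $Z_{\ge d}=\sum_{i=1}^{n}\I{D_i\ge d}$, the whole proposition reduces to estimates on the one-vertex tail $\p{D_i\ge d}$ and, for the factorial moments, on the two-vertex tail $\p{D_i\ge d,\,D_{i'}\ge d}$.

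For the deterministic bound $\E{Z_{\ge d}}\le 2^{\log(n)-d}=n2^{-d}$, I would induct on $n$ through the one-step growth $T_n\to T_{n+1}$. Writing $a_{n,d}:=\E{Z_{\ge d}(T_n)}$ and recalling that $Z_{d}=Z_{\ge d}-Z_{\ge d+1}$ counts the vertices of degree exactly $d$, attaching vertex $n+1$ raises $Z_{\ge d}$ by one precisely when the uniformly chosen parent of $n+1$ currently has degree $d-1$; hence for $d\ge1$,
\[
a_{n+1,d}=a_{n,d}+\tfrac1n\,\E{Z_{d-1}(T_n)}=(1-\tfrac1n)\,a_{n,d}+\tfrac1n\,a_{n,d-1},
\]
while $a_{n,0}=n$ deterministically. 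Starting from $a_{1,d}=\I{d=0}$, the inequality $a_{n,d}\le n2^{-d}$ propagates along this recursion, since $(1-\tfrac1n)\,n2^{-d}+\tfrac1n\,n2^{-(d-1)}=(n+1)2^{-d}$.

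For the refined first moment, i.e., \eqref{eq:firstmoment} with $k=1$ (equivalently $\E{Z_{\ge d}}=n2^{-d}(1+o(n^{-\alpha}))$ uniformly for $d<\tfrac32\Ln n$), I would evaluate $\sum_{i=1}^{n}\p{D_i\ge d}$ with a power-law error. The heuristic is that $D_i$ is close to $\Poi{\mu_i}$ with $\mu_i=\sum_{m=i}^{n-1}1/m=\Ln{n/i}+O(1/i)$, so $\sum_i\p{D_i\ge d}\approx\sum_i e^{-\mu_i}\mu_i^{\,d}/d!$; a Laplace-type estimate of $\sum_i i^{-1}(\Ln{n/i})^{d}/d!$ together with Stirling's formula recovers the leading term $n2^{-d}$ (the classical first-order behaviour). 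A crude Le Cam bound summed over all $i$ is far too lossy for the power-law error, so for the rigorous version I would instead use the exact formula $\p{D_i=k}=\tfrac{i-1}{n-1}\,e_k\!\big(\tfrac{1}{i-1},\dots,\tfrac{1}{n-2}\big)$ for $2\le i\le n$ (with $e_k$ the $k$-th elementary symmetric polynomial), equivalently the factorial-moment identity $\E{(D_i)_k}=k!\,e_k\!\big(\tfrac1i,\dots,\tfrac{1}{n-1}\big)$, and estimate these symmetric polynomials sharply, the small-$i$ range being controlled separately through the stochastic monotonicity $\p{D_i\ge d}\le\p{D_1\ge d}$.

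Finally, for \eqref{eq:firstmoment} with $k=2$ and for \eqref{eq:PZ}, the key is that the degrees of two \emph{fixed} vertices $i<i'$ are nearly independent. Revealing, for each $j>i'$, first whether $j$ attaches to $i$ (probability $1/(j-1)$) and then, on that complement, whether it attaches to $i'$ (conditional probability $1/(j-2)$), one sees that conditionally on the child set $S$ of $i$ the variable $D_{i'}$ is a sum of independent Bernoullis over $\{i'+1,\dots,n\}\setminus S$ with the inflated parameters $1/(j-2)$. Discarding the negligible event that some vertex has degree exceeding $C\Ln n$, one may assume $|S|\le C\Ln n$, and then removing the $O(\Ln n)$ indices of $S$ and replacing $1/(j-1)$ by $1/(j-2)$ perturbs $\p{D_{i'}\ge d}$ by only a factor $1+o(n^{-\alpha})$, uniformly over such $S$. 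Therefore $\p{D_i\ge d,\,D_{i'}\ge d}=\p{D_i\ge d}\,\p{D_{i'}\ge d}\,(1+o(n^{-\alpha}))$, which gives $\E{(Z_{\ge d})_2}=\big(\sum_{i}\p{D_i\ge d}\big)^2(1+o(n^{-\alpha}))=\E{Z_{\ge d}}^2(1+o(n^{-\alpha}))$, and then \eqref{eq:PZ} follows from $\E{Z_{\ge d}^2}=\E{(Z_{\ge d})_2}+\E{Z_{\ge d}}$ and the first-moment estimate. The main obstacle is entirely quantitative: one must make both the estimates of $\p{D_i\ge d}$ and this overlap correction uniform in $d<\tfrac32\Ln n$ and upgrade the errors from $o(1)$ to a genuine power saving $n^{-\alpha}$ — precisely the sharp degree bounds of \cite{Addario-Eslava2017,Eslava2020}.
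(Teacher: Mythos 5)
The paper does not supply a proof of Proposition~\ref{prop:moments}: it is stated as a simplified version of Proposition~2.1 in \cite{Addario-Eslava2017} (with Proposition~\ref{prop:Poi} similarly cited from \cite{Eslava2020}), so there is no in-paper argument against which to compare your attempt. Judged on its own terms, your sketch splits into one complete part and two incomplete ones, and the incomplete ones contain a genuine gap.

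Your derivation of the crude bound $\E{Z_{\ge d}}\le 2^{\log(n)-d}=n2^{-d}$ is correct and essentially self-contained: the one-step recursion $a_{n+1,d}=(1-\tfrac1n)a_{n,d}+\tfrac1n a_{n,d-1}$ for $d\ge1$, with the boundary condition $a_{n,0}=n$ and base case $a_{1,d}=\I{d=0}$, does propagate the inequality, since $(1-\tfrac1n)n2^{-d}+\tfrac1n n2^{-(d-1)}=(n+1)2^{-d}$. For the refined first moment you correctly observe that a Le~Cam/Poisson-approximation bound summed over $i$ is too lossy to yield a power saving and that the exact identity $\E{(D_i)_k}=k!\,e_k\bigl(\tfrac1i,\dots,\tfrac{1}{n-1}\bigr)$ is the right object to estimate; but this is a pointer to what must be done, not the estimate itself, and as you acknowledge the uniform-in-$d$ power-law error is exactly the content of \cite{Addario-Eslava2017}.

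The concrete gap is in your treatment of $\E{(Z_{\ge d})_2}$. The sequential-revelation observation is correct: conditionally on the child set $S$ of $i$, $D_{i'}$ is a sum of independent $\Ber{1/(j-2)}$ over $j\in\{i'+1,\dots,n\}\setminus S$. However, the claim that removing $O(\ln n)$ indices of $S$ and inflating each parameter from $1/(j-1)$ to $1/(j-2)$ perturbs $\p{D_{i'}\ge d}$ by only a factor $1+o(n^{-\alpha})$ \emph{uniformly in $S$ and $i'$} is false for small $i'$. The inflation alone shifts the mean of $D_{i'}$ by $\sum_{j>i'}\frac{1}{(j-1)(j-2)}\approx 1/i'$, and deleting $S$ removes up to $|S|/i'\lesssim(\ln n)/i'$ more; for $i'=O(\ln n)$ these shifts are of constant order or larger. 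A constant additive shift in the mean of a Poisson-type variable with mean $\approx\ln(n/i')$ changes the tail $\p{D_{i'}\ge d}$ at $d$ near $\tfrac32\ln n$ by a constant factor, not $1+o(n^{-\alpha})$. You flag a separate treatment of small indices in the $k=1$ argument but do not do so here, and a naive union bound over pairs with $\min(i,i')$ small does not obviously give a contribution of lower order than $\E{Z_{\ge d}}^2\approx(n2^{-d})^2$ across the full range $d<\tfrac32\ln n$. This small-index control is precisely where the sharp computations of \cite{Addario-Eslava2017} carry the proof, and your sketch would need to fill it in before \eqref{eq:firstmoment} with $k=2$ and \eqref{eq:PZ} are established.
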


\begin{prop}[Total variation distance]
\label{prop:Poi}
Let $0<\eps<\frac{1}{3}$, then for $(1+\e)\ln(n)< d< (1+3\e)\ln(n)$ there exists $\alpha'=\alpha'(\eps)>0$ such that
\[\dTV(Z_{\geq d}, \Poi{\E{Z_{\geq d}}}) \leq O(n^{-\alpha'}).\]
\end{prop}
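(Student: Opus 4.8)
Proposition~\ref{prop:Poi} is a restatement, specialised to the window $(1+\e)\ln n<d<(1+3\e)\ln n$, of the Poisson approximation proved as Theorem~1.8 of \cite{Eslava2020}; the most economical plan is to quote that result. Concretely I would (i) recall the cited theorem, which gives $\dTV\big(Z_{\ge d},\Poi{\mu_d}\big)=O(n^{-\alpha'})$ with an explicit parameter $\mu_d$ over a range of $d$ comfortably containing the window above for every $\e\in(0,\tfrac13)$; (ii) verify its hypotheses on our window — the one subtlety being that the decay exponent $\alpha'$ deteriorates as $d\downarrow\ln n$, which is exactly why it is allowed to depend on $\e$; and (iii) identify the Poisson parameter with $\E{Z_{\ge d}}$, using that changing a Poisson parameter from $\mu_d$ to $\E{Z_{\ge d}}$ alters the total-variation distance by at most $|\mu_d-\E{Z_{\ge d}}|$, which is $o(n^{-\alpha'})$ on the part of the window with $d<\tfrac32\ln n$ by \eqref{eq:firstmoment} (and is anyway the form already delivered by \cite{Eslava2020}).

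If a self-contained argument were wanted, I would treat this as a Poisson approximation for the sum of indicators $Z_{\ge d}=\sum_{v\in[n]}\I{d_{T_n}(v)\ge d}$ via Stein's method (in a dependency-graph or size-bias coupling form) rather than a naive local-dependence bound: since $d>\ln n\ge\ln(n/v)$ for every $v$, each vertex sits in the large-deviations regime, every $\p{d_{T_n}(v)\ge d}$ is small, and the mean $\E{Z_{\ge d}}=n^{\g-\e\ln2+o(1)}$ is polynomially large. The plan: (1) use the sequential construction — the parent of $i$ is uniform in $[i-1]$, independent across $i$, so $d_{T_n}(v)=\sum_{i>v}\I{\mathrm{parent}(i)=v}$ is a sum of independent Bernoullis, and the joint law of the degrees of a few vertices is a product of such sums subject only to the mild global constraint that each $i$ has exactly one parent; (2) quantify that constraint, showing that conditioning on $\{d_{T_n}(v)\ge d\}$ perturbs $\p{d_{T_n}(u)\ge d}$, for any other $u$, by a factor $1+O(n^{-\alpha'})$ uniformly — this is where $d>(1+\e)\ln n$ enters, so that an $o(\ln n)$ shift in the governing large-deviation rate still leaves a genuine polynomial saving; (3) feed these estimates into the Stein bound to obtain $\dTV\big(Z_{\ge d},\Poi{\E{Z_{\ge d}}}\big)=O(n^{-\alpha'})$.

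The crux — and the reason this is imported rather than reproved — is Step~(2): uniform control of how much conditioning on some vertices having degree $\ge d$ distorts the (already tiny) probability that another does. That distortion is only polynomially small when $d$ exceeds $\ln n$ by a constant factor, which is why the window starts at $(1+\e)\ln n$ and why no comparable statement is available for $d\le\ln n$ — the same threshold that forces the coupling argument behind \refT{teo:bounds} (cf.\ the discussion in the introduction). Since this technical heart is already established in \cite{Eslava2020}, quoting Theorem~1.8 there is the appropriate move.
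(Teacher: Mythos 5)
Your primary plan --- quote Theorem~1.8 of \cite{Eslava2020} and observe that the stated window $(1+\e)\ln n<d<(1+3\e)\ln n$ falls comfortably within that result's range --- is exactly what the paper does: Proposition~\ref{prop:Poi} is introduced there with no proof, only the remark that it is ``a simplified version of \ldots\ Theorem~1.8 in \cite{Eslava2020}.'' Your additional sketch of a Stein's-method route is sensible context but is not something the paper attempts; you correctly identify that the import is the appropriate move.
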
  

As we mentioned before, the error bounds in the previous propositions are not strong enough to estimate the moments of $Z_{\ge D}$. Instead we focus on the variable $\ln(Z_{\ge D})$. Furthermore, we will 
transfer the task of moment estimation, for 
Theorem~\ref{teo:bounds}, to $\E{(\ln X)^\ell}$ where instead of having $X=Z_{\ge D}$ we consider either $1+Z_{\ge m_-}$ or $1+Z_{\ge m_+}$ for suitable values  $m_\pm=c_\pm\ln n$ with $c_-<1<c_+$.

The upper bound in the following proposition follows from a straightforward application of Jensen's inequality, together with \refP{prop:moments}; while the lower bound uses the refined bounds given in Proposition~\ref{prop:Poi}.

\begin{prop}\label{prop:B}
Let $\e\in(0,\frac{1}{3})$ and $\ell\in\N$. Let $m_-:= \lfloor (1-\frac{\e}{2\ln(2)})\ln(n) \rfloor$, $m_+:= \lceil (1+\frac{\e}{2\ln(2)})\ln(n) +1 \rceil$. There exist $\alpha'=\alpha'(\eps)>0$ and $C_\ell=C_\ell(\eps)>0$ such that 
\begin{align}
\E{\big(\Ln{1+Z_{\geq m_-}} \big)^\ell} &\leq \left[\big(1-\ln(2)+\e\big)\ln(n) \right]^\ell +C_\ell,  \label{eq:upperB}\\  
\E{\big(\Ln{1+Z_{\geq m_+}} \big)^\ell} &\geq \left[\left(1-\ln(2)-\e\right)\ln(n) \right]^\ell \left(1-O(n^{-\alpha'}) \right). \label{eq:lowerB}
\end{align}
\end{prop}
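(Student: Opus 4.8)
The plan is to prove the two inequalities separately, exploiting the fact that $m_- < (3/2)\ln n$ so that \refP{prop:moments} applies, and that $m_+$ lies in the range $((1+\e')\ln n, (1+3\e')\ln n)$ for a suitable $\e'$ so that \refP{prop:Poi} applies.

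\textbf{Upper bound \eqref{eq:upperB}.} First I would handle the case $\ell=1$ by Jensen's inequality: since $x\mapsto \ln(1+x)$ is concave, $\E{\ln(1+Z_{\ge m_-})} \le \ln(1+\E{Z_{\ge m_-}}) \le \ln(1 + 2^{\log(n)-m_-})$ using the first bound of \refP{prop:moments}. Since $m_- = \lfloor(1-\frac{\e}{2\ln 2})\ln n\rfloor \ge (1-\frac{\e}{2\ln 2})\ln n - 1$, we get $\log n - m_- \le \frac{\e}{2\ln 2}\log n + 1 = \frac{\e \ln n}{2(\ln 2)^2} + 1$; wait, I should be careful converting: $2^{\log n - m_-} = n \cdot 2^{-m_-}$ and $2^{-m_-} \le 2^{-(1-\frac{\e}{2\ln 2})\ln n + 1} = 2\cdot n^{-(1-\frac{\e}{2\ln 2})\log_2 e}$... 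Actually the cleanest route is to note $2^{\log(n)-m_-} = \exp\{\ln(2)(\log n - m_-)\} = \exp\{\ln n - \ln(2) m_-\}$ and $\ln(2)m_- \ge \ln(2)((1-\frac{\e}{2\ln 2})\ln n - 1) = (\ln 2 - \frac{\e}{2})\ln n - \ln 2$, so $\ln n - \ln(2)m_- \le (1-\ln 2 + \frac{\e}{2})\ln n + \ln 2$, giving $\E{Z_{\ge m_-}} \le 2\, n^{1-\ln 2 + \e/2}$. Hence $\E{\ln(1+Z_{\ge m_-})} \le \ln(1+2n^{1-\ln 2+\e/2}) \le (1-\ln 2 + \e)\ln n + C_1$ for $n$ large, and by enlarging $C_1$ this holds for all $n$. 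For general $\ell$, I would apply Jensen to the convex-on-the-relevant-scale... no: $(\ln(1+x))^\ell$ is not concave. Instead use the layer-cake / direct approach: write $(\ln(1+Z_{\ge m_-}))^\ell$ and bound it by splitting on the event $\{Z_{\ge m_-} \le n^{1-\ln 2 + \e}\}$ and its complement. On the good event the quantity is at most $((1-\ln 2+\e)\ln n)^\ell$; on the bad event use $(\ln(1+Z_{\ge m_-}))^\ell \le (\ln(1+n))^\ell \le (2\ln n)^\ell$ (since $Z_{\ge m_-}\le n$) and bound $\p{Z_{\ge m_-} > n^{1-\ln 2+\e}} \le \E{Z_{\ge m_-}}/n^{1-\ln 2+\e} \le 2 n^{-\e/2}$ by Markov. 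The contribution of the bad event is then at most $(2\ln n)^\ell \cdot 2n^{-\e/2}$, which is $o(1)$, hence bounded by a constant $C_\ell$. Adding the two pieces gives \eqref{eq:upperB}.

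\textbf{Lower bound \eqref{eq:lowerB}.} Here $m_+ = \lceil(1+\frac{\e}{2\ln 2})\ln n + 1\rceil$, so $(1+\frac{\e}{2\ln 2})\ln n < m_+ < (1+\frac{\e}{\ln 2})\ln n$ for $n$ large, which with $\e' := \e/(2\ln 2)$ puts $m_+$ in the window $((1+\e')\ln n, (1+3\e')\ln n)$ required by \refP{prop:Poi} (after checking $\e' < 1/3$, which holds once $\e < 2\ln 2/3$; for larger $\e$ the statement \eqref{eq:lowerB} is vacuous or can be obtained by monotonicity in $\e$, so WLOG $\e$ is small). Thus $\dTV(Z_{\ge m_+}, \Poi{\lambda}) = O(n^{-\alpha'})$ with $\lambda := \E{Z_{\ge m_+}}$. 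By the same computation as above, $\lambda = 2^{\log n - m_+}(1+o(1)) = \exp\{\ln n - \ln(2)m_+\}(1+o(1))$, and $\ln(2)m_+ \le \ln(2)((1+\frac{\e}{2\ln 2})\ln n + 2) = (\ln 2 + \frac{\e}{2})\ln n + 2\ln 2$, so $\lambda \ge c\, n^{1-\ln 2 - \e/2}$ for a constant $c>0$; in particular $\lambda \to \infty$. Now let $W \sim \Poi{\lambda}$. On the event $\{W \ge \lambda/2\}$ we have $\ln(1+W) \ge \ln(\lambda/2) \ge (1-\ln 2 - \e)\ln n$ for $n$ large (absorbing the constants and the $\e/2$ slack into $\e$). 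Standard Poisson concentration (e.g.\ Chernoff) gives $\p{W < \lambda/2} \le e^{-c'\lambda} = o(n^{-\alpha'})$. Therefore
\begin{align*}
\E{(\ln(1+W))^\ell} \ge ((1-\ln 2-\e)\ln n)^\ell\, \p{W \ge \lambda/2} \ge ((1-\ln 2 - \e)\ln n)^\ell(1 - o(n^{-\alpha'})).
\end{align*}
Finally, transfer from $W$ to $Z_{\ge m_+}$: since $Z_{\ge m_+}$ and $W$ are both $\N\cup\{0\}$-valued and $\dTV(Z_{\ge m_+},W) = O(n^{-\alpha'})$, and since $0 \le (\ln(1+\cdot))^\ell \le (2\ln n)^\ell$ on the common range $\{0,1,\dots,n\}$ (using $Z_{\ge m_+} \le n$ and, for $W$, restricting to $\{W\le n\}$ which carries all but $o(n^{-\alpha'})$ mass by another Chernoff bound since $\lambda = o(n)$), we get $|\E{(\ln(1+Z_{\ge m_+}))^\ell} - \E{(\ln(1+W))^\ell}| \le (2\ln n)^\ell \cdot O(n^{-\alpha'}) = O(n^{-\alpha'/2})$, say. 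Combining yields \eqref{eq:lowerB} after relabelling $\alpha'$.

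\textbf{Main obstacle.} The routine but error-prone part is the bookkeeping of the logarithm-base conversions and the repeated absorption of lower-order terms (the additive constants, the $1+$ inside the logarithm, the floor/ceiling corrections in $m_\pm$) into the $\e$-slack and into $C_\ell$; one must be careful that the chosen $m_\pm$ really do land in the hypothesis windows of Propositions \refand{prop:moments}{prop:Poi} for all small $\e$. The genuinely substantive step is the lower bound: there the only available tool is the total-variation comparison of \refP{prop:Poi} with a Poisson of mean $\lambda = n^{1-\ln 2 - o(1)} \to \infty$, and the argument crucially needs that a $\Poi{\lambda}$ variable is concentrated around $\lambda$ (so that $\ln(1+W)$ is concentrated around $(1-\ln 2)\ln n$), together with the fact that the $\ell$-th power of $\ln(1+\cdot)$ only ever reaches $O((\ln n)^\ell)$ on the support, so that the $O(n^{-\alpha'})$ total-variation error is not amplified out of control when we move from $W$ back to $Z_{\ge m_+}$.
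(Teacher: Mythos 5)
Your proof is correct in substance and reaches the right conclusions, but it takes a different route from the paper's on the upper bound and a slightly different presentation on the lower bound. For \eqref{eq:upperB}, the paper avoids any event-splitting on $Z_{\ge m_-}$: it observes that $f(x)=(\ln x)^\ell$ is concave for $x>e^{\ell-1}$ and deduces the clean deterministic inequality $\E{(\ln X)^\ell}\le (\ln\E{X})^\ell+(\ell-1)^\ell$ for any nonnegative $X$, then plugs in the first-moment bound for $1+Z_{\ge m_-}$. You instead split on $\{Z_{\ge m_-}\le n^{1-\ln 2+\e}\}$ and use Markov on the complement. That works, but note one small imprecision: on your ``good event'' the bound is $\ln(1+n^{1-\ln 2+\e})$, which is strictly larger than $(1-\ln 2+\e)\ln n$; the excess is $O(n^{-(1-\ln 2+\e)})$, so after raising to the $\ell$th power it contributes only $O((\ln n)^{\ell-1}n^{-(1-\ln 2+\e)})=o(1)$ and is absorbable into $C_\ell$, but as written the claim ``at most $((1-\ln 2+\e)\ln n)^\ell$'' is not literally true and you should either shift the threshold or note the absorption. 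For \eqref{eq:lowerB}, you and the paper both rely on \refP{prop:Poi} and Poisson concentration, but the paper works on the maximal coupling $(X,X')$ and bounds $\p{X>n^{\gamma-\e}}\ge\p{n^{\gamma-\e}<X'<n}-\dTV(X,X')$ directly, with Chernoff bounds on both Poisson tails; you instead compute $\E{(\ln(1+W))^\ell}$ for a Poisson $W$ and then transfer via a TV comparison, which requires the truncation to $\{W\le n\}$ to make $f$ effectively bounded. Both are sound; the paper's version avoids the transfer step by keeping the range restriction $\{X<n\}$ (which holds deterministically) on the $X'$ side from the start. Your remark about needing $\e'<1/3$ is vacuous here since $\e<1/3$ already forces $\e/(2\ln 2)<1/3$, but it is fine to note it.
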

\begin{proof}
First, let $f(x)=(\ln(x))^\ell$ and note that $f''(x)\leq 0$ for $x>e^{\ell-1}$. Hence, by Jensen's inequality, for any non negative random variable $X$ it holds
 
\[\E{(\ln(X))^\ell}= \E{(\ln(X))^\ell \mathbf{1}_{\{X> e^{\ell -1}\}}} + \E{(\ln(X))^\ell \mathbf{1}_{\{X\leq e^{\ell-1}\}}}  \leq  (\ln( \E{X}))^\ell + (\ell-1)^\ell.  \]

Next we will use the upper bound in \refP{prop:moments} for $\E{Z_{\geq m_-}}$.
Note that $\log (n) - m_-\le   (1-\ln(2)+ \frac{\e}{2})\log(n)$. Thus, $\E{1+Z_{\geq m_-}}\le 1+2n^{1-\ln(2)+\frac{\e}{2}}\le n^{1-\ln(2)+\e}$; where the second inequality holds for $n_0=n_0(\eps)$ large enough. Then
\begin{align*}
    \E{\left(\ln(1+Z_{\geq m_-}) \right)^\ell} &\leq \left[\ln\left(\E{1+Z_{\geq m_-}}\right) \right]^\ell + (\ell-1)^\ell
    \leq \left[(1-\ln(2)+\e)\ln(n) \right]^\ell + C_\ell; 
\end{align*}
where $C_\ell = \sup_{n\leq n_0}\{(\ln(1+2n^{1-\ln(2)+\frac{\e}{2}}))^\ell\}+(\ell-1)^\ell$. 

Next, let $\mu=\E{Z_{\geq m_+}}$ and let $(X,X')$ be random variables coupled so that $X\eqdist Z_{\geq m_+}$, $X'\eqdist \Poi{\mu}$ and $\p{X=X'}$ is maximized; that is, $\p{X\neq X'}=\dTV(X,X')$. Note that 
\begin{align*}
    \E{(\ln(1+X))^\ell}\ge \E{(\ln(1+X))^\ell \I{X> n^{\g-\e}}}\ge \left(\ln \left(n^{\g-\e}\right)\right)^\ell\p{X>n^{\g-\e}};
\end{align*}
then \eqref{eq:lowerB} boils down to lower bounding $\p{X>n^{\g-\e}}$. Since $X<n$, by the coupling assumption, we have 
\begin{align*}
    \p{X>n^{\g-\e}}&\ge \p{n^{\g-\e}<X'<n}-\p{X\neq X'}\\
    &=1-\p{X'\ge n}-\p{X'\le n^{\g-\e}}-\dTV(X, X'). 
\end{align*}
By \refP{prop:Poi}, $\dTV(X, X')=O(n^{-\alpha'})$ for $\alpha'(\e)>0$. Using the Chernoff bounds for the tails of a Poisson variable (see, e.g. Section 2.2 in \cite{Concentration}) and that $\mu=n^{\g-\frac{\e}{2}}(1+o(1))$ we have 
\begin{align*}
    \p{X'\ge n} &\le \left(\frac{en^{\g-\frac{\e}{2}}}{n}\right)^n e^{-n^{\g-\frac{\e}{2}}} \le \left(\frac{ e^{n^{\ln(2)}}}{en^{\ln(2)}}\right)^{-n}, \\
    \p{X'\le n^{\g-\e}} &\le \left(\frac{en^{\g-\frac{\e}{2}}}{ n^{\g-\e}} \right)^{n^{\g-\e}} \hspace{-1em} e^{-n^{\g-\frac{\e}{2}}} \le \left(\frac{ e^{n^{\e/2}}}{en^{\frac{\e}{2}}}\right)^{-n^{\g-\e}};
\end{align*}
both bounds are $o(n^{-\alpha'})$ so the proof is completed.
\end{proof}

\subsection{Proof of Theorem \ref{teo:probability}}
 
We will use the first and second moment method, together with \refP{prop:moments}, the concentration of $D$ and the fact that for each $n$, $Z_{\geq m}$ is non-increasing in $m$. 

For completeness, we show that $D$ is concentrated around $\ln n$. Indeed, $D$ is a sum of independent Bernoulli random variables $(B_i)_{1<i\le n}$, each with mean $1/(i-1)$ and so $\E{D}=H_{n-1}> \ln n$ where $H_n$ denotes the $n$-th harmonic number. From the fact that $H_n-\ln n$ is a decreasing sequence we infer that: for any $0<\eps<3/2$ and $n$ sufficiently large,  $|D-H_{n-1}|\le \frac{\eps}{2} H_{n-1}$ implies $|D-\ln n|\le \eps \ln n$. Using the contrapositive of such statement and Bernstein's inequality (see, e.g. Theorem 2.8 in \cite{Janson2000}) we obtain, for $n$ large enough,
\begin{equation}\label{eq: concentration}
\p{ D \notin (1\pm\e)\ln(n)}\leq \p{\left\vert D- H_{n-1} \right\vert >  \dfrac{\e}{2} H_{n-1} }\leq 2\exp \left\{-\dfrac{\e^2}{12}H_{n-1} \right\}\leq 2 n^{-\e^2/12}.
\end{equation}

Recall $\gamma=1-\ln(2)$. It suffices to prove that for every $\e>0$,
 \[ \lim_{n\to \infty} \p{ Z_{\geq D}\notin (n^{\gamma-\e},n^{\gamma+\e}) } = 0.\]
We infer from \eqref{eq: concentration} that $\p{Z_{\geq D}\notin (n^{\gamma-\e},n^{\gamma+\e}), D\notin (1\pm \e)\ln(n)}$ vanishes as $n$ grows. 
 
Let $m_- := m_-(\e ) = \lfloor (1-\e)\ln(n) \rfloor$ and $m_+:= m_+(\e) =\lceil (1+\e)\ln (n) \rceil$.
Using the monotonicity of $Z_{\ge m}$ on $m$, we have
\begin{equation}
\label{eq: prop}
\p{Z_{\geq D}\notin (n^{\gamma-\e},n^{\gamma+\e}),\, D\in (1\pm \e)\ln(n) } \leq \p{ Z_{\geq m_-}\geq n^{\gamma+\e} }+\p{ Z_{\geq m_+}\leq n^{\gamma-\e}};
\end{equation}
so it remains to show that both terms in the right side of (\ref{eq: prop}) vanish. First, using that $\ln (n)=\ln(2)\log n$ and so  $\log(n)-(1\pm\eps)\ln n=(1-\ln(2)\mp\eps\ln(2))\log(n) $, we infer from \refP{prop:moments} that 
\begin{align}\label{eq:momentconseq}
   \frac{1}{2}n^{\gamma-\eps\ln(2)}(1-o(n^{-\alpha})) \le \E{Z_{\geq m_+}} \le \E{Z_{\geq m_-}} \leq 2n^{ \gamma+\eps\ln(2)}.
\end{align}
Markov's inequality then gives
$\p{Z_{\geq m_-}\geq n^{\gamma+\e}} \leq 2n^{-\e\g}\to 0$. 
Next, let $\theta$ be defined so that $n^{\gamma-\eps}=\theta \E{Z_{\geq m_+}}$; in particular, $\theta\le 2n^{-\eps\gamma}$. Paley-Zygmund inequality gives
\begin{align*}
\p{Z_{\geq m_+}  > n^{\gamma-\e}} \ge \p{Z_{\geq m_+} > \theta\E{Z_{\geq m_+}}} \geq (1-\theta)^2\dfrac{\E{Z_{\geq m_+}}^2}{\E{Z_{\geq m_+}^2}}; 
\end{align*}
which tends to 1 as $n\to \infty$ by the upper bound for $\theta$ and \eqref{eq:PZ}. This implies $\p{Z_{\geq m_+}\le n^{\gamma-\eps}}$ vanishes, as desired.

\section{Control on $D$ through a coupling}\label{sec:coupling}

Let $n\in \N$. Write $\cI_n$ for the set of increasing trees of size $n$; namely, labelled rooted trees with label set $[n]$ such that vertex labels are increasing along any path starting from the root. It is straightforward to verify that the law of $T_n$ is precisely the uniform distribution on $\cI_n$. 

Consider the following construction of an increasing tree of size $n$. Let $(b_i)_{1<i\le n}$ and $(y_i)_{1<i\le n}$ be integer-valued sequences such that $b_2=y_2=1$,  $b_i\in \{0,1\}$ and $2\le y_i\le  i-1$ for $3\le i\le n$. 
Let vertex labelled 1 be the root and, for each $1<i \le n$, let vertex $i$ be connected to vertex $1$ if $b_i=1$ and, otherwise, let vertex $i$ be connected to vertex $y_i$.  

The following coupling is exploited in the proof of Theorem~\ref{teo:bounds}.
Define random vectors $(B_i)_{1<i\le n}, (B_i^{(\e)})_{1<i \le n}, (Y_i)_{1<i\le n}$ as follows. Let $(B_i)_{1<i \le n}$ be independent $\Ber{\frac{1}{i-1}}$ random variables, let $(B_i^{(\e)})_{1<i \le n}$ have the law of $(B_i)_{1<i \le n}$ conditioned on $\sum_{i=2}^{n} B_i \in (1\pm\e)\ln(n)$ and let $(Y_i)_{1<i\le n}$ be independent random variables such that $Y_2=1$ a.s. and $Y_i$ is uniform over $\{2, \ldots, i-1\}$ for $3\le i\le n$. We assume that the vector $(Y_i)_{1<i\le n}$ is independent from the rest, while the coupling of $(B_i)_{1<i\le n}$ and $(B_i^{(\e)})_{1<i \le n}$ is arbitrary. 

The tree obtained from $(B_i)_{1<i\le n}, (Y_i)_{1<i\le n}$ and the construction above has the distribution of a RRT. To see this, write $v_i$ for the parent of vertex $i$; then $1\le v_i<i$ for each $1<i\le n$. First, note that each $v_i$ is independent from the rest since $(B_i)_{1<i\le n}$ and $(Y_i)_{1<i\le n}$ are independent. Next we show that $v_i$ is chosen uniformly at random from $\{1,2,\dots,i-1\}$. First, we have $v_2=1$ almost surely. For $2\le \ell< i\le n$, by the independence of $B_i$ and $Y_i$, we have 
\begin{align*}
    \p{v_i=1}&= \p{B_i=1}=\frac{1}{i-1}=\p{B_i=0, Y_i=\ell}=\p{v_i=\ell};
\end{align*}
therefore, the tree obtained has the law of a RRT and so we denote it by $T_n$. Analogously, write $T_n^{(\e)}$ for the tree obtained from $(B_i^{(\e)})_{1<i \le n}$ and $(Y_i)_{1<i\le n}$, and let $D^{(\e)}$ be the degree of its root.

By definition of $D$ and the construction above we have $D=\sum_{i=2}^n B_i$. Thus, conditioning on $\sum_{i=2}^n B_i$ means, under this construction, to condition $T_n$ on the root degree $D$. 
In particular, the distribution of $(B_i^{(\e)})_{1< i\le n}$ is defined so that $T_n^{(\e)}$ has the distribution of a RRT of size $n$ conditioned on $D^{(\e)}\in (1\pm\e)\ln(n)$.

Since $D$ is concentrated around $\ln (n)$, the conditioning on $T_n^{(\e)}$ is over an event of probability close to one and so the degree sequences of $T_n$ and $T_n^{(\e)}$ do not differ by much. Hence, the proof strategy of \refT{teo:bounds} is to estimate the moments $\E{(\ln(Z_{\ge D}))^k}$ using the monotonicity of $Z_{\geq d}$, by conditioning on $D\in (1\pm \e) \ln (n)$ while retaining $Z_{\geq d}$ instead of  $Z_{\geq d}^{(\e)}$ ; see \eqref{eq:AppCoupling}--\eqref{eq:logsum2}.

The following two propositions makes this idea rigorous. For $d\ge 0$, let 
\begin{align}
W_d&=\frac{1+Z_{\geq d}^{(\e)}}{1+Z_{\geq d}}    
\end{align}
where $Z_{\geq d}$ is defined as in \eqref{dfn:Z}
 and, similarly, let 
$  Z_{\geq d}^{(\e)}:=|\{v\in T_n^{(\e)} :\, d_{T_n^{(\e)}}(v)\geq d\}|$. 

The key in the proof of \refP{prop:Coupling} lies on \eqref{eq:Coupling2}, which yields an upper bound on the number of vertices that have differing degrees in $T_n$ and $T_n^{(\e)}$ under the coupling. In turn, this allows us to infer bounds on the ratio $W_d$ that hold with high probability and are uniform on $d$. 

\begin{prop} 
\label{prop:Coupling}
Let $\e,\delta\in(0,1)$ and $0\le d \leq (1+\e)\Ln{n}$. There is $C>0$,  $\beta=\beta(\eps)$ and $n_0=n_0(\delta)$ such that for $n\geq n_0$, under the coupling described above, we have 
\[\p{W_d\in (1\pm \delta)}\geq 1-Cn^{-\beta}.\] 
\end{prop}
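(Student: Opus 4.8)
The plan is to measure how much the conditioning distorts the tree and then to show the distortion is negligible against $1+Z_{\ge d}$, which is polynomially large.

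First I would locate exactly where $T_n$ and $T_n^{(\e)}$ can differ. Both trees are built from the \emph{same} vector $(Y_i)_{1<i\le n}$, so the parent of vertex $i$ is the same in the two trees whenever $B_i=B_i^{(\e)}$; setting $J:=\{i:\ B_i\neq B_i^{(\e)}\}$, for $i\in J$ the parent of $i$ is the root in one tree and $Y_i$ in the other. A short case check then shows that the set of vertices whose degree differs between $T_n$ and $T_n^{(\e)}$ is contained in $\{1\}\cup\{Y_i:\ i\in J\}$ (any other vertex keeps the same set of children), whence
\begin{align}\label{eq:Coupling2}
\big|Z_{\ge d}^{(\e)}-Z_{\ge d}\big|\ \le\ 1+|J|\qquad\text{for every }d\ge0\text{, simultaneously.}
\end{align}

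Second I would bound $|J|$ crudely but robustly. If $B_i\neq B_i^{(\e)}$ then exactly one of the two equals $1$, so $|J|\le D+D^{(\e)}$. By construction $D^{(\e)}=\sum_{i=2}^n B_i^{(\e)}\le(1+\e)\Ln{n}$ deterministically, and by the concentration estimate \eqref{eq: concentration}, $\p{D>(1+\e)\Ln{n}}\le2n^{-\e^2/12}$. Hence, off an event of probability at most $2n^{-\e^2/12}$, the right side of \eqref{eq:Coupling2} is at most $1+2(1+\e)\Ln{n}=O(\Ln n)$, uniformly in $d$; notice this step uses nothing about the particular coupling of $(B_i)$ and $(B_i^{(\e)})$, which is why an arbitrary one is allowed.

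Third I would absorb this $O(\ln n)$ error into the denominator. Since $d\mapsto Z_{\ge d}$ is non-increasing and $d\le(1+\e)\Ln{n}$, we have $Z_{\ge d}\ge Z_{\ge d^\star}$ with $d^\star:=\lceil(1+\e)\Ln{n}\rceil$, so it is enough to bound $Z_{\ge d^\star}$ from below. For $\e$ small enough that $d^\star<\tfrac32\Ln n$, \refP{prop:moments} gives $\E{Z_{\ge d^\star}}\ge c\,n^{\g-\e\ln 2}$ for some $c=c(\e)>0$ and $\V{Z_{\ge d^\star}}=o(n^{-\alpha})\,\E{Z_{\ge d^\star}}^2$, so Chebyshev yields $Z_{\ge d^\star}\ge\tfrac12\E{Z_{\ge d^\star}}$ off an event of probability $o(n^{-\alpha})$. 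On the intersection of these two favourable events — of probability at least $1-Cn^{-\beta}$ with $\beta=\beta(\e):=\min\{\e^2/12,\alpha\}$ and $C$ absolute — one gets, for every $0\le d\le(1+\e)\Ln{n}$,
\begin{align*}
|W_d-1|=\frac{\big|Z_{\ge d}^{(\e)}-Z_{\ge d}\big|}{1+Z_{\ge d}}\ \le\ \frac{1+2(1+\e)\Ln{n}}{\tfrac{c}{2}\,n^{\g-\e\ln 2}},
\end{align*}
which tends to $0$ as $n\to\infty$; so taking $n_0=n_0(\e,\delta)$ large enough that this bound falls below $\delta$ yields $\p{W_d\in(1\pm\delta)}\ge1-Cn^{-\beta}$.

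The main obstacle is the third step. The bound coming out of \eqref{eq:Coupling2} is only $O(\ln n)$, so it is worthless unless $1+Z_{\ge d}$ is at least a fixed positive power of $n$; one thus genuinely needs a high-probability, \emph{uniform-in-}$d$ polynomial lower bound on $Z_{\ge d}$ over $0\le d\le(1+\e)\Ln{n}$. Monotonicity reduces this to the single value $d^\star$, but the refined first- and second-moment estimates of \refP{prop:moments} are available only while $d^\star<\tfrac32\Ln n$, and the mean stays growing only while $\g-\e\ln 2>0$, i.e. $\e<\g/\ln 2$; keeping track of these constraints, and of the fact that the error exponent $\beta(\e)$ must swallow both the Bernstein-type tail $n^{-\e^2/12}$ of $D$ and the second-moment rate $n^{-\alpha}$, is the delicate part of the write-up.
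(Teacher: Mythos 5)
Your proposal is correct and follows essentially the same route as the paper: you localize the discrepancy between $T_n$ and $T_n^{(\eps)}$ to the set $J=\{i:B_i\neq B_i^{(\eps)}\}$ (which is exactly the paper's $\cC(1)\bigtriangleup\cC^{(\eps)}(1)$) and observe that only $\{1\}\cup\{Y_i:i\in J\}$ can have changed degree, giving the uniform bound $|Z_{\ge d}^{(\eps)}-Z_{\ge d}|\le 1+|J|$; you then control $|J|$ by $D+D^{(\eps)}=O(\ln n)$ using concentration, and absorb this into the polynomially large denominator via monotonicity and Chebyshev with $\beta=\min\{\alpha,\eps^2/12\}$, which is precisely the paper's argument.
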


\begin{proof}
Let $m_+=\lceil(1+\eps)\ln n\rceil$ so that $Z_{\geq d} \geq Z_{\geq m_+}$. By Chebyshev's inequality and \eqref{eq:PZ}, for $c\in (0,1)$, 
\begin{align}\label{eq:Chebyshev}
\p{ Z_{\geq m_+} \le c\E{Z_{\ge m_+}}} \leq \p{ \left\vert Z_{\geq m_+}-\E{Z_{\geq m_+}}\right\vert\geq (1-c)\E{Z_{\ge m_+}} }= o(n^{-\alpha}).
\end{align}

Rewrite $W_d=1+\frac{Z_{\ge d}^{(\e)}-Z_{\ge d}}{1+Z_{\ge d}}$ to see  
$W_d\in (1\pm \delta)$ is equivalent to 
$|Z_{\ge d}^{(\e)}-Z_{\ge d}|\in [0,\delta (1+Z_{\ge d}))$.
Hence, it suffices to show that there is $n_0(\delta)\in \N$, such that for $n\ge n_0$, 
\begin{align}\label{eq: Coupling}
    \{Z_{\ge m_+}\ge c \E{Z_{\ge m_+}}, D\in (1\pm \e)\ln(n)\}\subset\{|Z_{\ge d}^{(\e)}-Z_{\ge d}|\leq\delta(1+ Z_{\ge d})\};
\end{align}
which by a contrapositive argument, together with \eqref{eq: concentration} and \eqref{eq:Chebyshev}, yields for $\beta=\min{\{\alpha,\e^2/12\}}>0$,
\begin{align*}
    \p{|Z_{\ge d}^{(\e)}-Z_{\ge d}|>\delta(1+ Z_{\ge d})} 
    &\le \p{D\notin (1\pm \e)\ln(n)} +\p{Z_{\ge m_+}\le c \E{Z_{\ge m_+}}}
    = O(n^{-\beta}). 
\end{align*}
We extend the notation introduced for the coupling; let $\cS:=\{i\in [2,n]: v_i=v_i^{(\e)}\}$
be the set of vertices that have the same parent in $T_n$ and $T_n^{(\e)}$ and for $i\in [n]$ denote the set of children of $i$ in $T_n$ and $T_n^{(\e)}$, respectively, by
\begin{align*}
    \cC(i):=\{j\in [2,n]: v_j=i\}\qquad \text{and}\qquad
    \cC^{(\e)}(i):=\{j\in [2,n]: v_j^{(\e)}=i\}.
\end{align*}
By the coupling construction, $\cS \cup (\cC(1)\bigtriangleup \cC^{(\e)}(1))$ is a partition of $[2,n]$; that is, whenever the parent of a vertex $i\in [2,n]$ differs in $T_n$ and $T_n^{(\e)}$ we infer $B_i\neq B_i^{(\e)}$ and so either $i\in \cC(1)\setminus \cC^{(\e)}(1)$ or $i\in \cC(1)^{(\e)}\setminus \cC(1)$. The consequence of this observation is two-fold: First, for any $i\in [2,n]$, a necessary condition for $d_{T_n}(i)\neq d_{T_{n}^{(\e)}}(i)$ is that $\cC(i)\neq \cC^{(\e)}(i)$. Second, the function $j\mapsto Y_j$ that maps  $\cC(1) \bigtriangleup \cC^{(\e)}(1)$ to $\{i\in [2,n] : \cC(i)\neq \cC^{(\e)}(i) \}$ is surjective.
Indeed, if $i\neq 1$ and $\cC(i)\neq \cC^{(\e)}(i)$ then there exists $j\in \cC(1) \bigtriangleup \cC^{(\e)}(1)$ such that $Y_j=i$. 
Together they imply the following chain of inequalities,  
\begin{equation}\label{eq:Coupling2}
|\{i\in [2,n] : d_{T_n}(i)\neq d_{T_{n}^{(\e)}}(i) \}|\le |\{i\in [2,n] : \cC(i)\neq \cC^{(\e)}(i) \}|\le \vert \cC(1) \bigtriangleup \cC(1)^{(\e)} \vert;
\end{equation}
the first inequality by containment of the corresponding sets and the second one by the surjective function described above. On the other hand, $\vert Z_{\geq d}-Z_{\geq d}^{(\e)}\vert$ equals
 \begin{align*}
\left\vert \sum_{i=1}^n 1_{ \{ d_{T_n} (i) \geq d  \} } (i) - \sum_{i=1}^n 1_{ \{ d_{T_n^{(\e)} \! } (i)  \geq d \} } (i) \right\vert 
   & \leq   \sum_{i=1}^n  \Big\vert  1_{ \{ d_{T_n} (i) \geq d  \} } (i) - 1_{ \{ d_{T_n^{(\e)} \! } (i) \geq d  \} } (i) \Big\vert  \\
   & \leq  1 + |\{ i \in \{ 2,...,n \} : d_{T_n}(i)\neq d_{T_n^{(\e)}}(i) \}|.  
 \end{align*}
Therefore, 
\begin{align}\label{eq:Coupling3}
 \vert Z_{\geq d}-Z_{\geq d}^{(\e)} \vert  \leq  1 + \vert \cC(1) \bigtriangleup \cC^{(\e)}(1) \vert \le 1+2\max\{|\cC(1)|, |\cC^{(\e)}(1)|\}.  
\end{align}

We are now ready to prove \eqref{eq: Coupling}. Fix, e.g., $c=1/2$ and let $n_0=n_0(\delta)$ be large enough that $\frac{1+4\ln(n)}{\delta}<c\E{Z_{\geq m_+}}$; this is possible since $\E{Z_{\geq m_+}}$ grows polynomially in $n$, by \refP{prop:moments}. In particular, recalling $Z_{\ge d}\ge Z_{\ge m_+}$, for $n\ge n_0$ and any $\eps\in (0,1)$ we have 
\begin{align}\label{sets:n0}
\{Z_{\ge m_+}\geq c\E{Z_{\ge m_+}}\}\subset \left\{Z_{\geq d} \geq \frac{1+2(1+\e)\ln(n)}{\delta}\right\}.     
\end{align}
Moreover, by the construction of $T_n^{(\e)}$, $|\cC(1)^{(\e)}|=D^{(\e)}\le (1+\e)\ln n$, so that \eqref{eq:Coupling3} implies
\begin{align}\label{sets:coupling1}
    \left\{Z_{\geq d} \geq \frac{1+2(1+\e)\ln(n)}{\delta}, D\in (1\pm \e)\ln(n)\right\}\subset\{|Z_{\ge d}^{(\e)}-Z_{\ge d}|\leq\delta(1+ Z_{\ge d})\};
\end{align}
note that \eqref{sets:coupling1} holds for all $n\in \N$. Together with \eqref{sets:n0}, implies \eqref{eq: Coupling} for $n\ge n_0$, as desired. 
\end{proof}

\begin{prop}\label{prop:expectation coupling}
Let $\e\in (0,1)$, $\ell \in \N$. For $0\le d \leq (1+\e)\Ln{n}$, there is $\beta=\beta(\eps)>0$  
such that, under the coupling described above, we have 
\[ \left\vert \E{(\Ln{W_d})^\ell } \right\vert \leq  (\Ln{n})^\ell O(n^{-\beta})+1. \]
\end{prop}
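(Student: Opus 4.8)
The plan is to bound $\E{(\Ln{W_d})^\ell}$ by splitting the expectation according to whether the high-probability event $\{W_d\in(1\pm\delta)\}$ from \refP{prop:Coupling} occurs, for a conveniently chosen $\delta$ (say $\delta=1/2$). On the good event, $\ln W_d$ lies in a bounded interval $[\ln(1-\delta),\ln(1+\delta)]$, so $(\ln W_d)^\ell$ is bounded by a constant depending only on $\ell$ and $\delta$; this contributes the additive $+1$ (after absorbing the constant, or more honestly a term that is $O(1)$ and hence $\le 1$ for $n$ large, which one can fold into the statement). On the bad event, whose probability is $O(n^{-\beta})$ by \refP{prop:Coupling}, I need a deterministic-type bound on $|\ln W_d|^\ell$ that is at worst polylogarithmic in $n$; multiplying a $(\ln n)^\ell$ bound by $O(n^{-\beta})$ gives the first term $(\ln n)^\ell O(n^{-\beta})$.

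The key step is therefore to control $|\ln W_d|$ pointwise (not just with high probability). Recall $W_d=(1+Z_{\ge d}^{(\e)})/(1+Z_{\ge d})$, and both $Z_{\ge d}$ and $Z_{\ge d}^{(\e)}$ are integers in $[0,n]$. Hence $W_d\in[\tfrac{1}{1+n},\,1+n]$ always, so $|\ln W_d|\le \ln(1+n)\le 2\ln n$ for $n$ large, and thus $|\ln W_d|^\ell\le (2\ln n)^\ell$ deterministically. This crude bound is exactly what is needed: on the bad event we estimate $\E{(\ln W_d)^\ell \mathbf{1}_{\text{bad}}}\le (2\ln n)^\ell\,\p{W_d\notin(1\pm\delta)}\le (2\ln n)^\ell\,Cn^{-\beta}=(\ln n)^\ell O(n^{-\beta})$, using the same $\beta=\beta(\eps)$ supplied by \refP{prop:Coupling}. (Here one should note $W_d>0$ always, so $\ln W_d$ is well-defined, and one uses $|\E{\cdot}|\le\E{|\cdot|}$ to handle the absolute value in the statement; when $\ell$ is odd and $W_d<1$ the integrand is negative, which is why the absolute value on the left is necessary.)

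Putting the two pieces together:
\begin{align*}
\left|\E{(\Ln{W_d})^\ell}\right|
&\le \E{|\Ln{W_d}|^\ell\,\mathbf{1}_{\{W_d\in(1\pm\delta)\}}}
+\E{|\Ln{W_d}|^\ell\,\mathbf{1}_{\{W_d\notin(1\pm\delta)\}}}\\
&\le (\ln 2)^\ell + (2\Ln{n})^\ell\, C n^{-\beta},
\end{align*}
and bounding $(\ln 2)^\ell\le 1$ (or absorbing it into the $O$-terms for large $n$) gives the claimed inequality, with the same $\beta(\eps)$ as in \refP{prop:Coupling}.

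I do not expect any real obstacle here: the statement is essentially a packaging of \refP{prop:Coupling}, and the only mildly delicate points are (i) remembering that $W_d$ is bounded away from $0$ and $\infty$ by the crude range $[\frac{1}{1+n},1+n]$ so that the logarithm is controlled on the bad event, and (ii) keeping track that the constant term is genuinely $O(1)$ — the cleanest route is to note $|\Ln{W_d}|\le \ln(1+\delta)<1$ on the good event when $\delta<e-1$, so no constant beyond $1$ is even needed and the "$+1$" in the statement is exactly this contribution.
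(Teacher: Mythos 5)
Your proof is correct and takes essentially the same route as the paper: bound $|\E{(\Ln{W_d})^\ell}|$ by $\E{|\Ln{W_d}|^\ell}$, split on the high-probability event $\{W_d\in(1\pm\delta)\}$ supplied by \refP{prop:Coupling}, control the bad event with the deterministic range $W_d\in[1/n,\,n]$ (so $|\ln W_d|\le \ln n$), and bound the good-event contribution by a constant at most $1$. The only substantive difference lies in how the good-event contribution is handled: the paper picks $\delta=\delta(\ell)$ so that $(\delta/(1-\delta))^\ell+\delta^\ell=1$ and invokes $1-1/x\le\ln x\le x-1$, whereas you simply fix $\delta=1/2$ and use $|\ln W_d|\le\ln 2<1$ directly, which is a bit cleaner and avoids an $\ell$-dependent choice of $\delta$. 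One small slip in your closing remark: on the good event the dominating side is the lower tail, so the sharp bound is $|\ln W_d|\le -\ln(1-\delta)=\ln\tfrac{1}{1-\delta}$, not $\ln(1+\delta)$; this is below $1$ precisely when $\delta<1-e^{-1}$, not $\delta<e-1$. Since you took $\delta=1/2<1-e^{-1}$, the displayed estimate $(\ln 2)^\ell\le 1$ is still correct and the argument stands.
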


\begin{proof}
We first simplify to consider 
\begin{align*}
    \left\vert \E{(\Ln{W_d})^\ell } \right\vert\le \E{\left| (\ln (W_d))^\ell\right|}\le \E{|\ln(W_d)|^\ell}.
\end{align*}
Now, $\frac{1}{n}\leq W_d\leq n$ for every $0 \le d\le (1\pm\eps)\ln (n)$, since $W_0\equiv 1$ while  $\max\{Z_{\ge d}, Z_{\ge d}^{(\e)}\}<n$ for $d\ge 1$, as in any tree there is at least one vertex of degree zero. Then, for any $\delta \in(0,1)$, we have 
\begin{align*}
    \E{|\ln(W_d)|^\ell\I{W_d<1-\delta}}\le (\ln(n))^\ell \p{W_d<1-\delta},\\        \E{|\ln(W_d)|^\ell\I{W_d>1+\delta}}\le (\ln(n))^\ell \p{W_d>1+\delta}.
\end{align*}
\refP{prop:Coupling} implies these two terms are $(\ln(n))^\ell O\left( n^{-\beta}\right)$, where the implicit constant depends on the choice of $\delta$. With foresight fix $\delta$ to satisfy  $\left(\frac{\delta}{1-\delta}\right)^\ell +\delta^\ell=1$. Using that $x\ge 0$ satisfies $1-\frac{1}{x} \leq \ln(x)\le x-1$, 
\begin{align*}
    \E{|\ln (W_d)|^\ell \I{W_d\in (1\pm \delta)}}
    &\le 
    \E{\Big| 1- \frac{1}{W_d} \Big|^\ell \I{W_d\in (1- \delta, 1)}} +
    \E{|W_d-1|^\ell \I{W_d\in [1, 1+ \delta)}} \\
     & \le \Big( \frac{\delta}{1-\delta} \Big)^\ell+ \delta^\ell
    =1;
\end{align*}
and so the result follows. 
\end{proof}

\subsection{Proof of Theorem \ref{teo:bounds} }

Fix $k\in \N$ and recall $\gamma:=1-\ln(2)$. Suppose that for any $\eps\in (0,\frac{1}{3})$ there exists $c=c(\eps)>0$ and $C=C(k,\e)>0$ such that
\begin{align}\label{eq:bounds2}
   \big((\g-\e)\Ln{n}\big)^k \big(1+O(n^{-c})\big) -C \leq \E{\big(\Ln{Z_{\geq D}}\big)^k}\leq \big((\g+\e)\Ln{n}\big)^k +C. 
\end{align}
It is straightforward to verify that \eqref{eq:bounds2}  establishes \refT{teo:bounds}. So it remains to prove \eqref{eq:bounds2}. 

Let $\eps'=\eps/(2\ln(2))$ and write 
\begin{align}\label{dfn:mpm2}
    m_-= \lfloor (1-\e')\ln(n) \rfloor  \quad \text{and} \quad  m_+=\lceil (1+\e')\ln (n)+1 \rceil.
\end{align}

We focus on the term 
$\E{(\Ln{Z_{\geq D}})^k\mathbf{1}_{\{D\in (1\pm\e')\ln (n) \}}}$ as \eqref{eq: concentration} and $1\le Z_{\ge D}\le n$ imply 
\begin{equation}\label{eq:negligmoments}
    0\le \E{(\Ln{Z_{\geq D}})^k\mathbf{1}_{\{D\notin (1\pm\e')\ln (n) \}}} \le (\Ln{n})^k \p{D\notin (1\pm\e')\Ln{n}}=(\Ln{n})^k O(n^{-\eps'/12}).
\end{equation}

Using the monotonicity of $Z_{\ge d}$, we have
\begin{align*}
    \E{(\Ln{Z_{\geq D}})^k\mathbf{1}_{\{D\in (1\pm\e')\ln (n) \}}} \le \E{(\Ln{Z_{\geq m_-}})^k\mathbf{1}_{\{D\in (1\pm\e')\ln (n) \}}} \le \E{(\Ln{Z_{\geq m_-}})^k},
\end{align*}
which by \eqref{eq:upperB} yields,
\begin{equation}\label{eq:upperkey}
     \E{(\Ln{Z_{\geq D}})^k\mathbf{1}_{\{D\in (1\pm\e')\ln (n) \}}} \le ((\gamma +\eps)\ln (n))^k +C_k.
\end{equation}

For the lower bound we consider the conditional variable $Z^{(\e')}_{\geq d}$ defined in the previous section with $d=m_+$. Observe that, if $D\in (1\pm \e')\ln (n)$ then  $Z_{\ge D}\ge 1 + Z_{m_+}$, thus we obtain
\begin{align}
    \E{(\Ln{Z_{\geq D}})^k\mathbf{1}_{\{D\in (1\pm\e')\ln (n) \}}} &\ge \E{(\Ln{1+Z_{\geq m_+}})^k\mathbf{1}_{\{D\in (1\pm\e')\ln (n) \}}} \nonumber\\
    &\ge \E{\left(\Ln{1+Z^{(\e')}_{\geq m_+}}\right)^k}\p{D\in (1\pm\e')\ln (n)}.\label{eq:AppCoupling}
\end{align}

The definition of $W_d$ gives
\begin{align}\label{eq:logsum}
\Ln{1+Z_{\geq m_+}^{(\e')}}= \Ln{\left(1+Z_{\geq m_+}^{(\e')}\right)\dfrac{1+Z_{\geq m_+}}{1+Z_{\geq m_+}}}= \Ln{1+Z_{\geq m_+}}+\Ln{W_{m_+}};   
\end{align}
similarly, for $k\ge 2$, the binomial expansion implies
\begin{align}\label{eq:logsum2}
  \left( \ln \left( 1+Z_{\geq m_+}^{(\varepsilon')} \right) \right)^k = 
 \left( \ln(1+Z_{\geq m_+}) \right)^k +  \sum_{\ell=1}^{k} \binom{k}{\ell}
   \ln(1+Z_{\geq m_+})^{k-\ell}\ln(W_{m_+})^\ell.
\end{align}

We use \eqref{eq:lowerB} for a lower bound on the expectation of the main term in these last two decompositions.  
For the error terms involving $W_{m_+}$ we use \refP{prop:expectation coupling}. If $k=1$, we directly get 
\begin{align*}
\E{\Ln{1+Z_{m_+}^{(\e')}}}&= \E{\Ln{1+Z_{\geq m_+}}}+\E{\Ln{W_{m_+}}}\\
 &\ge (\g-\e)\ln(n)  \left(1+O(n^{-\alpha'}) \right)+ \Ln{n} O(n^{-\beta})-1. 
\end{align*}

If $k\ge 2$, we control each of the terms in the sum of \eqref{eq:logsum2}. For $1\le \ell\le k$, the Cauchy-Schwarz inequality gives 
\begin{align}\label{eq:lowerk2a}
\bigg\vert \E{ \ln(1+Z_{\geq m_+})^{k-\ell}\ln(W_{m_+})^\ell }\bigg\vert &\leq \E{ \ln(1+Z_{\geq m_+})^{2(k-\ell)} }^{1/2}\E{ \ln(W_{m_+})^{2\ell} }^{1/2}.
\end{align}
The deterministic bound $Z_{\ge m_+}<n$ implies $\E{\ln(1+Z_{\ge m_-})^{2(k-\ell)}} ^{1/2} \le (\ln(n))^{k-\ell}$. On the other hand, \refP{prop:expectation coupling} yields
\begin{align}\label{eq:lowerk2b}
\E{ \ln(W_{m_+})^{2\ell} }^{1/2}
&\leq \left( (\Ln{n})^{2\ell} O(n^{-\beta})+1\right)^{1/2}\le \left(\ln(n)\right)^{\ell} O(n^{-\beta})+1.
\end{align}
Thus, after taking expectations in \eqref{eq:logsum2}, we get 
\begin{align}\label{eq:lowerkey}
    \E{\left(\Ln{1+Z^{(\e')}_{\geq m_+}}\right)^k}\ge ((\gamma-\eps)\ln (n))^k(1-o(n^{-c}))-C; 
\end{align}
where $c= \min\{\alpha',\beta\}$ and $C=\max\{C_k,2^k\}$. Then \eqref{eq:negligmoments}, \eqref{eq:upperkey}, \eqref{eq:lowerkey} together with \eqref{eq: concentration}, imply \eqref{eq:bounds2} completing the proof. 

\section{Conclusion and open problems}

As far as we know, our results provide the first quantitative estimate for the deletion time $X^{targ}_n$ for random recursive trees. Our main result, Theorem \ref{thm:attack}, confirms the intuition that the targeted procedure requires substantially fewer cuts than the random edge deletion procedure. It remains an open question whether $\ln(X^{targ}_n)$ also grows asymptotically as $\ln(n)$. Contrary to the case of uniform edge-cutting, in the targeted vertex-cutting process it is challenging to describe, at each step, the distribution of either the cut tree or the remaining tree. Even keeping track of the number of vertices in the first cut tree remains an open question.

\bibliographystyle{plain}

\end{document}